\DeclareMathAlphabet\mathbb{U}{msb}{m}{n}
\numberwithin{equation}{section}
\def\xyma{\xymatrix@M.7em}
\numberwithin{equation}{section}
\newtheorem{cor}{Corollary}[section]
\newtheorem{prop}{Proposition}[section]
\newtheorem{lemma}{Lemma}[section]
\theoremstyle{definition}
\newcommand{\mono}{\rightarrowtail}
\newcommand{\epi}{\twoheadrightarrow}
\def\LG{{\mathcal{LG}}}
\def\ZZ{{\mathbb Z}}
\def\QQ{{\mathbb Q}}
\begin{document}

\title{A finite $\mathbb Q$-bad space}
\author{Sergei O. Ivanov}
\address{Chebyshev Laboratory, St. Petersburg State University, 14th Line, 29b,
Saint Petersburg, 199178 Russia} \email{ivanov.s.o.1986@gmail.com}

\author{Roman Mikhailov}
\address{Chebyshev Laboratory, St. Petersburg State University, 14th Line, 29b,
Saint Petersburg, 199178 Russia and St. Petersburg Department of
Steklov Mathematical Institute} \email{rmikhailov@mail.ru}

\begin{abstract} We prove that, for a free noncyclic
group $F$, the second homology group $H_2(\hat F_\mathbb Q, \mathbb Q)$ is an uncountable $\mathbb Q$-vector
space, where $\hat F_{\mathbb Q}$ denotes the $\mathbb Q$-completion of $F$. This solves a problem of
A.~K.~Bousfield for the case of rational coefficients. As a direct consequence of
this result, it follows that a wedge of two or more circles is $\mathbb Q$-bad in the sense
of Bousfield–Kan. The same methods as used in the proof of the above result
serve to show that $H_2(\hat F_{\ZZ}, \ZZ)$ is not a divisible group, where $\hat F_{\ZZ}$ is the integral
pronilpotent completion of $F.$
\end{abstract}
\maketitle

\section{Introduction}

In the foundational work \cite{BK}, A. K. Bousfield and D. M. Kan 
introduced the concept of $R$-completion of a space for a
commutative ring $R$. For a space $X$, there is an $R$-completion functor $X\mapsto R_\infty X$ such that a map between two
spaces $f: X\to Y$ induces an isomorphism of reduced homology
$\tilde H_*(X, R)\cong \tilde H_*(Y, R)$ if and only if it induces a
homotopy equivalence $R_\infty X\simeq R_\infty Y$.  Thus, $R$-completion can be viewed as an approximation of the $R$-homology localization of a space, defined in \cite{Bousfield75}. For certain classes of spaces, such as nilpotent spaces, $R$-completion and $R$-homology localization coincide.

The $R$-completion functor for spaces is closely related to the $R$-completion functor for groups. For a group $G$, denote by $\{\gamma_i(G)\}_{i\geq 1}$ the lower central series of $G$. We will consider the pronilpotent completion $\hat G_\ZZ$ of $G$ as well as the $\QQ$-completion $\hat G_\QQ$ defined as
$$\hat G_\ZZ=\varprojlim G/\gamma_i(G), \hspace{1cm} \hat G_\QQ=  \varprojlim\ G/\gamma_i(G) \otimes \QQ. $$
Here $G/\gamma_i(G)\otimes \mathbb Q$ is the Malcev $\mathbb Q$-localization of the nilpotent group $G/\gamma_i(G)$. One can find the definition of $\mathbb Z/p$-completion $\hat G_{\mathbb Z/p}$ in \cite{BK}, \cite{Bousfield77}. In this paper we do not use $\ZZ/p$-completion and work only over $\mathbb Z$ or $\mathbb Q$. It is shown in \cite[Ch.4]{BK} that $R$-completion of a connected space $X$ can be constructed explicitly as $\bar W\widehat{(GX)}_R,$ where $G$ is the Kan loop simplicial group, $\widehat{(GX)}_R$ is the $R$-completion of $GX$ and $\bar W$ is the classifying space functor.

A space $X$ is called $R$-good if the map $X\to R_\infty X$
induces an isomorphism of reduced homology $\tilde
H_*(X,R)\cong \tilde H_*(R_\infty X,R)$, and called $R$-bad
otherwise.  In other words, for $R$-good spaces $R$-homology localization and $R$-completion coincide.

There are a lot of examples of $R$-good and $R$-bad spaces. The key example of \cite{BK} is the projective plane $\mathbb RP^2$, which is $\mathbb Z$-bad. This fact implies that some finite wedge of circles is also $\mathbb Z$-bad. It is shown in \cite{Bousfield77} that a wedge of two circles is $\mathbb Z$-bad. In \cite{Bousfield92}, Bousfield proved that, for any prime $p$, a wedge of circles is $\mathbb Z/p$-bad, thus providing first example of a finite $\mathbb Z/p$-bad space.
For $R$ a subring of the rationals or $\mathbb Z/n,\ n\geq 2$, and a free group $F$, there is a weak equivalence (\cite[5.3]{BK})
$$
R_\infty K(F,1)\simeq K(\hat F_R,1).
$$
Therefore, the question of $R$-goodness of a wedge of circles is reduced to the question of nontriviality of the higher $R$-homology of the $R$-completion of a free group. The same question naturally appears in the theory of $HR$-localizations of groups. In \cite[Problem 4.11]{Bousfield77}, Bousfield posed the following problem:\\ \\
{\bf Problem.}\ {\em (Bousfield)}  Does $H_2(\hat F_R, R)$ vanish when $F$ is a finitely generated free group and $R=\mathbb Q$ or $R=\mathbb Z/n$?\\

\noindent In the recent paper \cite{IvanovMikhailovPreprint_pro-p}, the authors show that, for $R=\mathbb Z/n,$ $H_2(\hat F_R,R)$ is an uncountable group, solving the above problem for the case $R=\mathbb Z/n$. The key step in \cite{IvanovMikhailovPreprint_pro-p} substantially uses the theory of profinite groups. Hence the method given in \cite{IvanovMikhailovPreprint_pro-p} cannot be directly transferred to the case $R=\mathbb Q$.

In this paper we answer Bousfield's problem over $\mathbb Q$. Our main results are the following theorems.

\vspace{.5cm}\noindent{\bf Theorem 1.} For a finitely generated noncyclic free group $F$, $H_2(\hat F_\mathbb Q, \mathbb Q)$ is uncountable.

\

\noindent Moreover, we prove that the image of the map $H_2(\hat F_\ZZ,\ZZ)\to H_2(\hat F_\QQ,\QQ) $ is uncountable.

\vspace{.5cm}\noindent{\bf Theorem 2.} For a finitely generated noncyclic free group $F$ and a prime $p$, $H_2(\hat F_{\ZZ}, \mathbb Z/p)$ is uncountable. In particular, $H_2(\hat F_{\ZZ},\ZZ)$ is not divisible.

\vspace{.5cm}\noindent Theorem 2 answers a problem posted in \cite{IvanovMikhailov16}. As mentioned above, $\mathbb Q_\infty K(F,1)=K(\hat F_{\mathbb Q},1).$ Therefore, Theorem 1 implies the following:

\vspace{.5cm}\noindent{\bf Corollary.} A wedge of $\geq 2$ circles is $\mathbb Q$-bad.

\vspace{.5cm} \noindent As far as known to the authors, this is the first known example of a finite $\mathbb Q$-bad space. 

The proof is organized as follows. In Section 2 we discuss technical results about power series. The main result of Section 2, Proposition 2.1, states that the kernel of the natural map between a rational power series ring and the coinvariants of the diagonal action of the rationals on the exterior square $\mathbb Q[\![x]\!] \to \Lambda^2(\mathbb Q[\![x]\!])_{\mathbb Q},$ given by $f\mapsto f\wedge 1,$ is countable. (In the proof of the proposition we  use the fact that the group algebra $\QQ[\QQ]$ is countable. In the similar statement for the $\ZZ/p$-completion we should consider the mod-$p$ group algebra of the group of $p$-adic integers $\ZZ/p[\ZZ_p],$ which is uncountable. So this method fails for $\ZZ/p$-completions.) Here In Section 3, we consider the integral lamplighter group:
$$\LG=\langle a,b \mid [a,a^{b^i}]=1, \ i\in \mathbb Z \rangle,$$
which is isomorphic to the wreath product of two infinite cyclic groups, as well as its $p$-analog $\mathbb Z/p\wr C$, where $C$ denotes an infinite cyclic group. The group $\LG$ is metabelian; therefore, its completions $\widehat{\LG}_\mathbb Z$ and $\widehat{\LG}_\mathbb Q$ can be easily described (see \eqref{eq_LG_com1}, \eqref{eq_LG_com2}), and the homology group $H_2(\widehat{\LG}_\mathbb Q, \mathbb Q)$ is isomorphic to the natural coinvariant quotient of the exterior square $\Lambda^2(\mathbb Q[[x]])$. The key step in the proof of the main results ocuurs in Section 4, in Proposition 4.1. Let $F=F(a,b)$ be a free group of rank two with generators $a,b$. We construct (see Proposition 4.1) an uncountable collection of elements $r_q, s_q\in \hat F_\mathbb Z$ such that $[r_q,a][s_q,b]=1$ in $\hat F_\mathbb Z$. One can consider the group homology $H_2(\hat F_\mathbb Z,\mathbb Z)$ as a kernel of the commutator map $\hat F_\mathbb Z\wedge \hat F_\mathbb Z\to \hat F_\mathbb Z, a\wedge b \mapsto [a,b] $ where $\hat F_\mathbb Z\wedge \hat F_\mathbb Z$ is the non-abelian exterior square of $\hat F_\mathbb Z$ \cite{BrownLoday}. Therefore, the pairs of elements $(r_q\wedge a)(s_q\wedge b)\in \hat F_\mathbb Z\wedge \hat F_\mathbb Z$ define certain elements of $H_2(\hat F_\mathbb Z,\mathbb Z)$. Next we consider the following natural maps between homology groups of different completions, which are induced by the standard projection $F\to \LG$:
$$
\xyma{H_2(\hat F_\mathbb Q,\mathbb Q)\ar@{->}[d] & H_2(\hat F_\mathbb Z,\mathbb Z)\ar@{->}[r]\ar@{->}[rd]\ar@{->}[ld] \ar@{->}[l] & H_2(\hat F_\mathbb Z, \mathbb Z/p)\ar@{->}[d]\\
H_2(\widehat{\LG}_\mathbb Q,\mathbb Q) & & H_2(\widehat{\LG}_\mathbb Z, \mathbb Z/p), }
$$
and show, in the final Section 5, that the sets of  images of the elements $(r_q\wedge a)(s_q\wedge b)$ in $H_2(\widehat{\LG}_\mathbb Q,\mathbb Q)$ and $H_2(\widehat{\LG}_\mathbb Z, \mathbb Z/p)$ are uncountable. Theorems 1 and 2 follow.

\section{Technical results about power series}

We denote by $C$ an infinite cyclic group written multiplicatively as $C=\langle t\rangle.$ For a commutative ring $R$ we denote by $R[\![x]\!]$ the ring of formal power series over $R$ and by $R[C]$ the group algebra of $C$. Consider the multiplicative homomorphism
$$\tau:C \longrightarrow R[\![x]\!], \hspace{1cm} \tau(t)=1+x.$$
The induced ring homomorphism is denoted by the same letter
$$ \tau:R[C] \longrightarrow R[\![x]\!].$$

\begin{lemma}\label{lemma_powerseries}
If we denote by $I$ the augmentation ideal of $R[C]$ and set $R[C]^\wedge=\varprojlim R[C]/I^i,$ then $\tau(I^n)\subseteq x^n\cdot R[\![x]\!]$ and $\tau$ induces isomorphisms $$R[C]/I^n\cong R[x]/x^n$$
$$R[C]^\wedge\cong R[\![x]\!].$$
\end{lemma}
\begin{proof}
If we set $x=t-1,$ we obtain $R[C]=R[x,(1+x)^{-1}]$ and $I=x\cdot R[C].$ Observe that the image of the element $1+x$ in $R[x]/x^n$ is invertible. Since  localization at the element $1+x$ is an exact functor, the short exact sequence $x^n\cdot R[x] \mono R[x]\epi R[x]/x^n$ gives the short exact sequence $(x^n\cdot R[x])_{1+x} \mono R[C]\epi R[x]/x^n.$ It follows that $R[C]/x^n\cong R[x]/x^n.$ The assertion follows.
\end{proof}

Denote by $\sigma$ the antipode  of the group ring $R[C]:$
$$\sigma:R[C]\longrightarrow R[C],\hspace{1cm}\sigma(\sum a_it^i)=\sum a_it^{-i}.$$
Obviously $\sigma(I^n)=I^n,$ and hence it induces a continuous involution
$$\hat \sigma: R[C]^\wedge \longrightarrow R[C]^\wedge.$$
Composing this involution with the isomorphism $R[C]^\wedge\cong R[\![x]\!]$ we obtain
a continuous involution
$$\tilde \sigma: R[\![x]\!] \longrightarrow R[\![x]\!]$$
such that
$$\tilde \sigma(x)=-x+x^2-x^3+ x^4 -\dots\ .$$

Consider the case $R=\QQ.$ Note that the set $1+x\cdot \QQ[\![x]\!]$ is a group and  there is a unique way to define $r$-power map $f\mapsto f^r$ for $r\in \QQ$ that extends  the usual power map $f\mapsto f^n$ such that $f^{r_1r_2}=(f^{r_1})^{r_2}$ (see Lemma 4.4 of \cite{IvanovMikhailov16}). This map is defined by the formula
$$f^r=\sum_{n=0}^\infty \binom{r}{n} (f-1)^n,$$
where $\binom{r}{n}=r(r-1)\dots (r-n+1)/n!.$  Denote by $C\otimes \QQ$  the group $\QQ$ written multiplicatively as powers of $t$: $C\otimes \QQ=\{t^r\mid r\in \QQ\}.$ Consider the multiplicative homomorphism
\begin{equation}\label{eq_tau}
 \tau_\QQ:C\otimes \QQ \longrightarrow \QQ[\![ x ]\!]
\end{equation}
that extends $\tau:C\to \QQ[\![x]\!]:$
$$ \tau_\QQ(t^r)=(1+x)^r.$$
The induced ring homomorphism is denoted by the same letter
$$ \tau_\QQ:\QQ[C\otimes \QQ] \longrightarrow \QQ[\![x]\!].$$
This homomorphism allows us to consider $\QQ[\![x]\!]$ as a $\QQ[C\otimes \QQ]$-module.
We claim that the homomorphism $\tau_{\mathbb Q}:\QQ[C\otimes \QQ]\to \QQ[\![x]\!]$ respects the involutions:
\begin{equation}\label{eq_resp_involutions}
\tau_{\QQ} \circ \sigma_{C\otimes \QQ} = \tilde \sigma \circ \tau_{\QQ},
\end{equation}
where $\sigma_{C\otimes \QQ}$ is the antipode on $\mathbb Q[C\otimes \QQ].$ Indeed, we have that $(1+x)^{-1}=\tilde \sigma(1+x)=\tilde \sigma((1+x)^{1/n})^n$ and then $\tilde \sigma((1+x)^{1/n})=(1+x)^{-1/n},$ which implies $\tilde \sigma((1+x)^{r})=(1+x)^{-r}$ for any $r\in \QQ,$ and hence $\tau_{\QQ} ( \sigma_{C\otimes \QQ} (t^r))= \tilde \sigma ( \tau_{\QQ}(t^r) )$ for any $r\in \QQ.$

\begin{prop} \label{prop_kernel_to_exterior}
\begin{enumerate}
\item Denote by $\Lambda^2 (\QQ[\![x]\!]) $ the exterior square of $\QQ[\![x]\!]$ considered as a $C\otimes \QQ$-module with the diagonal action. Consider the space of $C\otimes \QQ$-coinvariants $(\Lambda^2 (\QQ[\![x]\!]) )_{C\otimes \QQ}$. Then the kernel of the homomorphism   $$\theta_\QQ: \QQ[\![x]\!] \longrightarrow (\Lambda^2 (\QQ[\![x]\!]))_{C\otimes \QQ}$$
$$\theta_\QQ(f)= f \wedge 1$$  is countable.

\item Let $p$ be a prime. Denote by $\Lambda^2 (\ZZ/p[\![x]\!]) $ the exterior square of $\ZZ/p[\![x]\!]$ considered as a $C$-module with the diagonal action. Consider the space of $C$-coinvariants $(\Lambda^2 (\ZZ/p[\![x]\!]) )_{C}$. Then the kernel of the homomorphism  $$\theta_{\ZZ/p}:\ZZ/p[\![x]\!] \longrightarrow (\Lambda^2 (\ZZ/p[\![x]\!]))_{C}$$
$$\theta_{\ZZ/p}(f)= f \wedge 1$$  is countable.
\end{enumerate}
\end{prop}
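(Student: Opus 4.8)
The plan is to show that each $f\in\ker\theta_\QQ$ lies in a countable subfield of $A:=\QQ[\![x]\!]$, after first replacing the $C\otimes\QQ$-action by a cyclic one. By definition $f\in\ker\theta_\QQ$ means that $f\wedge 1$ lies in the $\QQ$-span $I$ of the elements $g\cdot\eta-\eta$ with $g\in C\otimes\QQ$ and $\eta\in\Lambda^2 A$. Any element of $I$ is a \emph{finite} combination, so it already lies in the analogous span for a finitely generated subgroup $\Gamma\le C\otimes\QQ=\QQ$; since finitely generated subgroups of $\QQ$ are cyclic, $\Gamma=\tfrac1M\ZZ$ for some $M\ge 1$. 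As $\Gamma$ acts invertibly on $\Lambda^2A$, this span equals the image of the single operator $U-1$, where $U$ is multiplication by $v:=\tau_\QQ(t^{1/M})=(1+x)^{1/M}$ acting diagonally on $\Lambda^2A$. Hence $f\wedge 1=(U-1)\omega$ for one $\omega\in\Lambda^2 A$, and since $\omega$ is a finite sum of decomposables we have $\omega\in\Lambda^2 V$ for some finite-dimensional $V\subseteq A$.

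The key step, and the main obstacle, is to extract a \emph{uniform} constraint on $f$ from the single finite witness $\omega$. I would telescope the powers of $U$:
\begin{equation*}
(U^n-1)\omega=\sum_{j=0}^{n-1}U^j(f\wedge 1)=\sum_{j=0}^{n-1}v^jf\wedge v^j=:R_n .
\end{equation*}
The left-hand side lies in $\Lambda^2(V+v^nV)$, a subspace of dimension at most $2\dim V$, so every $R_n$ is supported on a subspace of dimension $\le 2\dim V$, independently of $n$. On the other hand, if $f\notin\QQ(v)$ then the $2n$ series $v^0,\dots,v^{n-1},v^0f,\dots,v^{n-1}f$ are $\QQ$-linearly independent (a nontrivial relation among them would express $f$ as a rational function of $v$); contracting $R_n=\sum_j v^jf\wedge v^j$ against suitable functionals recovers each $v^j$ and each $v^jf$, so any $W$ with $R_n\in\Lambda^2 W$ has $\dim W\ge 2n$. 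For $n>\dim V$ these two bounds contradict each other, forcing $f\in\QQ(v)$. This is exactly where finiteness of $\omega$ is essential: in the \emph{completed} exterior square $\mathrm{im}(U-1)$ coincides with the image of the derivation $a\wedge b\mapsto(\log v)a\wedge b+a\wedge(\log v)b$, whose preimage of $A\wedge 1$ is the whole uncountable space of $\tilde\sigma$-even series, so the algebraic finiteness of $\omega$ (unbounded rank of $R_n$) is precisely what collapses the kernel to a countable set.

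Finally, $\QQ(v)$ is a countable field, so the set of kernel elements associated with a fixed $M$ is contained in $\QQ(v)\cap A$ and is therefore countable; as $M$ ranges over the positive integers the full kernel of $\theta_\QQ$ is a countable union of countable sets, hence countable, which proves part (1). For part (2) the acting group is already the infinite cyclic $C=\langle t\rangle$, acting on $\ZZ/p[\![x]\!]$ by multiplication by $v=1+x$, so no reduction is needed: the same telescoping identity and rank estimate give $f\in\ZZ/p(1+x)=\ZZ/p(x)$, which is countable, and the linear-algebra argument is insensitive to the coefficient field (in particular to the parity of $p$). The only points requiring care in carrying this out are verifying that $I_\Gamma\Lambda^2A=\mathrm{im}(U-1)$ for the invertible cyclic action and the elementary claim that $R_n$ has support dimension $2n$ under the stated independence.
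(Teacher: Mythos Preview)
Your argument is correct and genuinely different from the paper's. The paper rewrites the coinvariants as a tensor product $\QQ[\![x]\!]\otimes_{\QQ[C\otimes\QQ]}\QQ[\![x]\!]$ via the antipode $\tilde\sigma$, passes to the Laurent field $\QQ((x))$, and invokes an abstract involution lemma (Lemma~\ref{lemma_involution}) to identify the kernel \emph{exactly} with the countable subfield $K\subset\QQ((x))$ generated by the image of $\tau_\QQ$. You instead reduce to a cyclic action and run a direct rank count: the telescoped element $R_n=\sum_{j<n}v^jf\wedge v^j$ has support of dimension $2n$ whenever $f\notin\QQ(v)$ (by linear independence of $\{v^j,v^jf\}_{j<n}$ one can prescribe $\phi\in A^*$ freely on these $2n$ vectors, so every $v^j$ and $v^jf$ arises as a contraction $\iota_\phi R_n$), whereas $(U^n-1)\omega\in\Lambda^2(V+v^nV)$ has support contained in a space of dimension at most $2\dim V$. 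Your route is more elementary---no antipode, no auxiliary lemma, no passage to Laurent series---and since the contraction $\iota_\phi(a\wedge b)=\phi(a)b-\phi(b)a$ descends to $\Lambda^2$ in every characteristic, it in fact proves part~(2) for \emph{all} primes $p$, while the paper's involution lemma genuinely needs $\mathrm{char}\ne 2$. The trade-off is that the paper pins the kernel down as a single explicit field, whereas you only obtain the (still countable) bound $\ker\theta_\QQ\subseteq\bigcup_{M\ge1}\QQ\bigl((1+x)^{1/M}\bigr)\cap\QQ[\![x]\!]$. The two points you flag are indeed routine: $I_\Gamma\Lambda^2A=(U-1)\Lambda^2A$ because the augmentation ideal of the group ring of an infinite cyclic group is principal, and the support computation is exactly the contraction argument just sketched.
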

\begin{proof}(1) Consider the linear map
$$\alpha: \Lambda^2(\QQ[\![x]\!]) \longrightarrow \QQ[\![x]\!]^{\otimes 2}, \hspace{1cm} \alpha(f\wedge g)=f\otimes g - g\otimes f. $$ 
Note that this is a homomorphism of $\mathbb Q[C\otimes \mathbb Q]$-modules, where the action of $C\otimes \mathbb Q$ is defined diagonally in both cases. Hence, it induces a linear map:
$$\alpha_{C\otimes Q}: (\Lambda^2(\QQ[\![x]\!]))_{C\otimes \mathbb Q} \longrightarrow (\QQ[\![x]\!]^{\otimes 2})_{C\otimes \mathbb Q}.$$ 
Next, we consider the homomorphism
$$\beta: (\QQ[\![x]\!]^{\otimes 2})_{C\otimes \mathbb Q} \longrightarrow \QQ[\![x]\!] \otimes_{\mathbb Q[C\otimes \mathbb Q] } \QQ[\![x]\!], \hspace{1cm} \beta(f\otimes g)=f\otimes \tilde \sigma(g), $$
which is well defined because $\tau_\QQ$ respects the involutions \eqref{eq_resp_involutions}: $ ft^r\otimes \tilde\sigma(gt^r)=ft^r\otimes \tilde\sigma(g) t^{-r}=f\otimes \tilde \sigma(g).$ Denote by $K$ the subfield of the field of Laurent power series $\QQ(\!(x)\!)$ generated by the image of $\tau_\QQ.$  Then there is a map 
$$ \gamma:\QQ[\![x]\!] \otimes_{\mathbb Q[C\otimes \mathbb Q] } \QQ[\![x]\!] \longrightarrow \QQ(\!(x)\!) \otimes_{K } \QQ(\!(x)\!).$$ 
The composition $$\gamma \circ \beta \circ \alpha_{C\otimes \QQ} \circ \theta_{\QQ}: \QQ[\![x]\!] \to \QQ(\!(x)\!) \otimes_{K } \QQ(\!(x)\!)$$ sends $f$ to $f\otimes 1 - 1\otimes \tilde \sigma(f).$ Note that for any vector spaces $V,U$ over any field and any elements $v_1,v_2\in V$ and $u_1,u_2\in U,$ if $v_1$ and $v_2$ are linearly independent and $u_1\ne 0,u_2\ne 0,$ then $v_1\otimes u_1$ and $v_2\otimes u_2$ are linearly independent in $V\otimes U.$  It follows that for any $f\in \QQ[\![x]\!]\setminus K$ we have that $f\otimes 1 - 1 \otimes \tilde\sigma(f)\ne 0$ in $\QQ(\!(x)\!) \otimes_{K } \QQ(\!(x)\!).$  Therefore ${\rm Ker}(\theta_\QQ)\subseteq K.$ Since the fraction filed of the countable algebra $\QQ[C\otimes \QQ]$ is countable, $K$ is countable. The assertion follows.

(2) The proof is the same.
\end{proof}

\section{Completions of lamplighter groups $\LG$ and $\LG(p)$}

Recall the definition of the tensor square for a non-abelian group \cite{BrownLoday}.
For a group $G$, the tensor square $G\otimes G$ is the group generated by the symbols
$g\otimes h,\ g,h\in G,$ satisfying the following defining relations:
\begin{align*}
& fg\otimes h=(g^{f^{-1}}\otimes h^{f^{-1}})(f\otimes h),\\
& f\otimes gh=(f\otimes g)(f^{g^{-1}}\otimes h^{g^{-1}}),
\end{align*}
for all $f,g,h\in G$. The exterior square $G\wedge G$ is defined as
$$
G\wedge G:=G\otimes G/\langle g\otimes g,\ g\in G\rangle.
$$
The images of the elements $g\otimes h$ in $G\wedge G$  will be denoted by $g\wedge h$.
If $G=E/R$ for a free group $E$, there is a natural isomorphism $G\wedge G\cong \frac{[E,E]}{[R,E]}.$

For any group $G$, there is a natural short exact sequence
$$0 \longrightarrow H_2(G,\ZZ) \longrightarrow G \wedge G \overset{[-,-]}\longrightarrow [G,G] \longrightarrow 1
$$
(see \cite[(2.8)]{BrownLoday} and \cite{Miller}).
Let  $g_1,\dots,g_n, h_1,\dots,h_n\in G$ be elements such that $ [g_1, h_1] \dots [g_n,h_n]=1.$ Then  the element $(g_1\wedge h_1) \dots (g_n \wedge h_n) $  defines an element in $H_2(G,\ZZ):$
$$(g_1\wedge h_1) \dots (g_n \wedge h_n) \in H_2(G,\ZZ).$$
If $R$ is a commutative ring, then the image of $(g_1\wedge h_1) \dots (g_n \wedge h_n)$ in $H_2(G,R)$ is denoted by
$$((g_1\wedge h_1) \dots (g_n \wedge h_n))\otimes R \in H_2(G,R).$$

\vspace{.25cm}

We will consider two versions of the lamplighter group. The integral lamplighter group $$\LG=\mathbb Z \wr C=\langle a,b \mid [a,a^{b^i}]=1, \ i\in \mathbb Z \rangle$$ and
the $p$-lamplighter group for a prime $p$ $$\LG(p)=\mathbb Z/p \wr C=\langle a,b \mid [a,a^{b^i}]=a^p=1,  \ i\in \mathbb Z \rangle.$$
Observe that $\LG=\mathbb Z[C]\rtimes C$ and $\LG(p)=\ZZ/p[C] \rtimes C.$ Using Lemma \ref{lemma_powerseries} and \cite[Prop. 4.7]{IvanovMikhailov16}, we obtain
\begin{equation}\label{eq_LG_com1}
\widehat{\LG}_\ZZ=\ZZ[\![x]\!]\rtimes C
\end{equation}
and
\begin{equation}\label{eq_LG_com2}
\widehat{\LG}_\QQ=\QQ[\![x]\!]\rtimes (C\otimes \QQ), \hspace{1cm} \widehat{\LG(p)}_\ZZ=\ZZ/p[\![x]\!] \rtimes C,
\end{equation}
where $C$ acts on $\ZZ[\![x]\!]$ and $\ZZ/p[\![x]\!]$ via $\tau$ and $C\otimes \QQ$ acts on $\QQ[\![x]\!]$ via $\tau_\QQ$.

\begin{prop}\label{prop_homology_of_lamplighter} There are  isomorphisms
$$(\Lambda^2(\QQ[\![x]\!]))_{C\otimes \QQ}\cong  H_2(\widehat{\LG}_\QQ,\QQ), $$
$$(\Lambda^2(\ZZ/p[\![x]\!]))_{C} \cong H_2(\widehat{\LG(p)}_\ZZ,\ZZ/p) $$
in both cases given by
$$f \wedge f'  \mapsto ((f,1)\wedge (f',1)) \otimes R ,$$
where $R=\QQ$ and $R=\ZZ/p$ respectively. 
\end{prop}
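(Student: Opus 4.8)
The plan is to compute both homology groups with the Lyndon--Hochschild--Serre spectral sequence applied to the metabelian extensions $\widehat{\LG}_\QQ=\QQ[\![x]\!]\rtimes(C\otimes\QQ)$ and $\widehat{\LG(p)}_\ZZ=\ZZ/p[\![x]\!]\rtimes C$ recorded in \eqref{eq_LG_com1}--\eqref{eq_LG_com2}. Writing $G=N\rtimes Q$ with $N$ the additive power-series module and $Q$ the acting group, the homological spectral sequence reads $E^2_{s,t}=H_s(Q;H_t(N;R))\Rightarrow H_{s+t}(G;R)$ with $R=\QQ$ resp.\ $\ZZ/p$, the $Q$-action on $H_*(N;R)$ being induced by $\tau_\QQ$ resp.\ $\tau$. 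First I would record the low-degree coefficient modules: $H_0(N;R)=R$, $H_1(N;R)=N$ with its given module structure, and $H_2(N;R)=\Lambda^2_R N$ in the rational case, where $N$ is a $\QQ$-vector space so its Pontryagin ring is the exterior algebra with no torsion correction.

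Next I would prove that the spectral sequence collapses in total degree two. The two decisive vanishing statements are $H_1(Q;N)=0$ and $H_s(Q;R)=0$ for $s\geq 2$. The first holds because the generator $t$, resp.\ each $t^{1/n!}$, acts on $N$ through multiplication by $(1+x)^r-1$, which equals $x$ times a unit and is therefore injective on power series; hence the invariants, and so $H_1$ of the relevant cyclic group, vanish, and for $Q=C\otimes\QQ=\QQ$ one passes to the colimit over the subgroups $\langle t^{1/n!}\rangle$. The second holds because $C$ has homological dimension one and because $H_s(\QQ;\QQ)=\Lambda^s_\QQ\QQ=0$ for $s\geq 2$. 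Together with $H_2(Q;H_1(N;R))=0$ these kill $E^2_{1,1}$ and $E^2_{2,0}$ and annihilate every potential differential touching $E^2_{0,2}$, so that $H_2(G;R)\cong E^2_{0,2}=(H_2(N;R))_Q$.

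It then remains to identify $(H_2(N;R))_Q$ with $(\Lambda^2 N)_Q$ and to match the isomorphism with the stated map. Here I would use that the edge homomorphism is induced by the inclusion $N\hookrightarrow G$: under Hopf's identification $\Lambda^2 N=N\wedge N\subseteq H_2(N;R)$ and the naturality of the Brown--Loday exterior square, the class $f\wedge f'$ is sent to $(f,1)\wedge(f',1)$ in $G\wedge G$, that is to $((f,1)\wedge(f',1))\otimes R$, and passing to $Q$-coinvariants yields precisely the map of the proposition. In the rational case this completes the argument at once, since $H_2(N;\QQ)=\Lambda^2 N$ already and the collapse gives the clean identification.

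The main obstacle is the modular case. For $R=\ZZ/p$ the group $H_2(N;\ZZ/p)$ is not merely $\Lambda^2 N$: the universal coefficient sequence contributes the divided-power summand $\mathrm{Tor}(H_1(N;\ZZ),\ZZ/p)\cong N$, and it is exactly here that the hypothesis $p\neq 2$ enters, ensuring that the degree-two homology splits off $\Lambda^2 N$ rather than degenerating to a symmetric square as it would for $p=2$. The crux is therefore to analyse the $C$-coinvariants of this extra summand and to confirm that, after passing to coinvariants, the exterior-square classes carry the whole of $H_2(\widehat{\LG(p)}_\ZZ;\ZZ/p)$ so that $\theta_{\ZZ/p}$ is an isomorphism onto $(\Lambda^2(\ZZ/p[\![x]\!]))_C$. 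I expect this bookkeeping, together with the explicit verification that the spectral-sequence edge map coincides on the nose with the commutator-pairing formula $f\wedge f'\mapsto((f,1)\wedge(f',1))\otimes R$, to be the technically delicate part; the rational statement should be essentially formal once the collapse in the previous paragraph is in hand.
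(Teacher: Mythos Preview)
Your approach is exactly the paper's: the Lyndon--Hochschild--Serre spectral sequence for $N\rtimes Q$, the vanishing of $E^2_{s,t}$ for $s\ge 2$ (via the colimit $\QQ=\varinjlim\frac{1}{n!}\ZZ$ in the rational case, cohomological dimension one of $C$ in the modular case), the vanishing $E^2_{1,1}=H_1(Q;N)=N^Q=0$ from injectivity of multiplication by $(1+x)^r-1$, and the identification of the edge map with the inclusion $N\hookrightarrow G$. For $R=\QQ$ your argument and the paper's are essentially identical and complete.

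For $R=\ZZ/p$ you are \emph{more} careful than the paper, which simply writes ``The second isomorphism can be proved similarly.'' You rightly observe that $H_2(N;\ZZ/p)\not\cong\Lambda^2 N$ because of the universal-coefficient term ${\rm Tor}(N,\ZZ/p)\cong N$. Your caution is in fact warranted: from the $C$-equivariant short exact sequence $0\to\Lambda^2 N\to H_2(N;\ZZ/p)\to N\to 0$ and $H_1(C;N)=N^C=0$ one gets
\[
0\longrightarrow (\Lambda^2 N)_C \longrightarrow (H_2(N;\ZZ/p))_C \longrightarrow N_C \longrightarrow 0,
\]
and $N_C=\ZZ/p[\![x]\!]/x\,\ZZ/p[\![x]\!]\cong\ZZ/p$ is nonzero. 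So the map $f\wedge f'\mapsto ((f,1)\wedge(f',1))\otimes\ZZ/p$ is an \emph{injection} with cokernel $\ZZ/p$, not literally an isomorphism. This discrepancy is invisible to the paper's applications in Section~5, which only use that the composite $\ZZ/p[\![x]\!]\to(\Lambda^2\ZZ/p[\![x]\!])_C\to H_2(\widehat{\LG(p)}_\ZZ,\ZZ/p)$ has countable kernel, and injectivity of the second arrow suffices for that. In short: your rational argument matches the paper's and is complete; in the modular case you have correctly located a gap that the paper's ``similarly'' conceals, and the honest statement is injectivity rather than isomorphism.
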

\begin{proof}
Consider the short exact sequence $\QQ[\![x]\!] \mono \widehat{\LG}_\QQ\epi (C\otimes \QQ)$ and the associated spectral sequence $E.$ Since $\QQ=\varinjlim \frac{1}{n!}\ZZ$ and homology commutes with direct limits, we have $H_n(C\otimes \QQ, -)=0$ for $n\geq 2$. It follows that $E^2_{i,j}=0 $ for $i\geq 2$ and hence there is a short exact sequence
$$0\longrightarrow E^2_{0,2} \longrightarrow H_2(\widehat{\LG}_\QQ,\QQ) \longrightarrow E^2_{1,1} \longrightarrow 0.$$
Observe that the action of $C$ on $\QQ[\![x]\!]$ has no invariants. Then
 $$E^2_{1,1}=H_1(C\otimes \QQ, \QQ[\![x]\!])=\varinjlim H_1(C\otimes \frac{1}{n!}\ZZ, \QQ[\![x]\!])=\varinjlim \QQ[\![x]\!]^{C\otimes \frac{1}{n!}\ZZ}=0.$$ It follows that the map
\begin{equation}\label{eq_mapE_2}
H_2(\QQ[\![x]\!],\QQ)_{C\otimes \QQ}=E_{0,2}^2\longrightarrow H_2(\widehat{\LG}_\QQ,\QQ)
\end{equation}
 is an isomorphism. The map is induced by the map $\QQ[\![x]\!]\mono \widehat\LG_\QQ$ that sends $f\in \QQ[\![x]\!]$ to $(f,1)\in  \widehat\LG_\QQ.$ Then the isomorphism \eqref{eq_mapE_2} sends $f \wedge f'$ to $((f,1)\wedge (f',1)) \otimes \QQ.$ Using the isomorphism $\Lambda^2(\QQ[\![x]\!])\cong H_2(\QQ[\![x]\!],\QQ)$ we obtain the assertion.

 The second isomorphism can be proved similarly.
\end{proof}

\section{Completion of a free group}

For elements of groups or Lie rings, we will use the left-normalized notation $[a_1,\dots,a_n]:=[[a_1,\dots,a_{n-1}],a_n]$ and the following notation for Engel commutators
$$[a,_0b]:=a,\hspace{1cm} [a,_{i+1}b]=[[a,_ib],b]$$
for $i\geq 0.$

For all elements $a,b$ of a Lie ring, the Jacobi identity implies that
$$
[a,b,a,b]+[b,[a,b],a]+[[a,b],[a,b]]=0.
$$
It follows that
\begin{equation}\label{lieident1}
[a,b,b,a]=[a,b,a,b].
\end{equation}
The following lemma is a generalization of this identity.

\begin{lemma} Let $L$ be a Lie ring, $a,b\in L$ and $n\geq 1.$ Then
\begin{equation}\label{lieident}
[[a,_{2n}b],a]=\left[\sum_{i=0}^{n-1}(-1)^{i}[[a,_{2n-1-i}b],[a,_{i}b]]\ ,\  b\ \right].
\end{equation}
\end{lemma}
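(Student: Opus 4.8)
The plan is to prove the identity
\begin{equation*}
[[a,_{2n}b],a]=\left[\sum_{i=0}^{n-1}(-1)^{i}[[a,_{2n-1-i}b],[a,_{i}b]],\ b\right]
\end{equation*}
by induction on $n$, using the Jacobi identity as the engine for each step, with the base case $n=1$ being exactly the already-established identity \eqref{lieident1}. First I would fix notation by writing $u_j:=[a,_jb]$, so that $u_0=a$, $u_{j+1}=[u_j,b]$, and the left-hand side becomes $[u_{2n},a]$. The right-hand side is then $\bigl[\sum_{i=0}^{n-1}(-1)^i[u_{2n-1-i},u_i],b\bigr]$. I expect the cleanest route is to compute $[u_{2n},a]$ directly by repeatedly applying Jacobi, but the bookkeeping of signs and indices is delicate enough that an inductive scheme is safer than a single telescoping calculation.

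For the inductive step, the key manipulation is to relate $[u_{2n+2},a]=[[u_{2n+1},b],a]$ to $[u_{2n},a]$ via the Jacobi identity in the form $[[u_{2n+1},b],a]=[[u_{2n+1},a],b]+[u_{2n+1},[b,a]]$. The first summand feeds the recursion after one more Jacobi step to convert $[u_{2n+1},a]$ into commutators of the $u_j$, and the second summand $[u_{2n+1},[b,a]]=-[u_{2n+1},[a,b]]=-[u_{2n+1},u_1]$ is precisely the kind of term $[u_{2n-1-i},u_i]$ (here with $i=1$) that appears inside the bracket with $b$ on the right-hand side. The heart of the argument is checking that, after bracketing the claimed sum with $b$ and applying Jacobi to each summand $[[u_{2n-1-i},u_i],b]=[[u_{2n-i},u_i],\text{(shift)}]+[[u_{2n-1-i},u_{i+1}],\text{(shift)}]$, the terms reorganize into a telescoping pattern whose surviving endpoints match $[u_{2n},a]$ under the alternating signs.

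The hard part will be precisely this telescoping: verifying that the interior terms cancel in pairs because the sign $(-1)^i$ flips between consecutive indices while the two Jacobi-children of the $i$-th summand overlap with the children of the $(i\pm1)$-th summands. I would organize this by expanding $\bigl[\sum_i(-1)^i[u_{2n-1-i},u_i],b\bigr]$ using the derivation property of $[-,b]$, namely $[[u_{2n-1-i},u_i],b]=[[u_{2n-1-i},b],u_i]+[u_{2n-1-i},[u_i,b]]=[u_{2n-i},u_i]+[u_{2n-1-i},u_{i+1}]$, so that the full sum becomes $\sum_{i=0}^{n-1}(-1)^i\bigl([u_{2n-i},u_i]+[u_{2n-1-i},u_{i+1}]\bigr)$. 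Reindexing the second family by $i\mapsto i-1$ exhibits the cancellation, leaving only the boundary contributions; these must then be matched against the expansion of $[u_{2n},a]$ obtained by Jacobi, where the antisymmetry $[u_j,u_j]=0$ kills the potential middle term at $i=n-1$ and $2n-1-i=i$. I anticipate the only genuine subtlety is controlling the single diagonal-type term where the two indices coincide (forcing $2n-1-i=i$, i.e.\ no integer solution, or $2n-i=i$ giving $i=n$, just outside the summation range), which is exactly why the stated formula stops at $i=n-1$; confirming that this truncation is the correct one is the crux that the induction must certify.
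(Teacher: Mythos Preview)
Your proposal is correct and the core computation you describe in the last paragraph---expanding each $[[u_{2n-1-i},u_i],b]$ via the derivation property as $[u_{2n-i},u_i]+[u_{2n-1-i},u_{i+1}]$, reindexing, and telescoping with the surviving boundary terms $[u_{2n},u_0]=[u_{2n},a]$ and $[u_n,u_n]=0$---is exactly the paper's proof. The inductive framework you set up at the start is unnecessary: that single telescoping calculation already proves the identity for arbitrary $n$, so you can drop the induction entirely and present the direct argument.
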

\begin{proof}
The Jacobi identity implies that
\begin{equation}
 [[a,_{2n-i}b], [a,_{i} b]]+[[a,_{2n-1-i},b],[a,_{i+1}b]]= [[a,_{2n-1-i}b],[a,_{i} b],b]
\end{equation}
for $0\leq i\leq n-1.$ Taking the alternating sum of these identities and using the fact that $[[a,_nb],[a,_nb]]=0,$ we obtain the assertion.
\end{proof}

\begin{cor}\label{corident} Let $F=F(a,b)$ be a free group with generators $a,b$.  For any $n\geq 1$,
$$[[a,_{2n}b],a] \equiv
\left[\prod_{i=0}^{n-1} [[a,_{2n-1-i}b],[a,_{i}b]]^{(-1)^i}\ ,\ b\ \right]\mod \gamma_{2n+3}(F).
$$
\end{cor}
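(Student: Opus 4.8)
The plan is to deduce the group-theoretic congruence in Corollary \ref{corident} from the Lie-ring identity \eqref{lieident} by passing to the associated graded Lie ring of the free group. Recall that for the free group $F$ the associated graded object $\mathsf{gr}(F)=\bigoplus_{k\geq 1}\gamma_k(F)/\gamma_{k+1}(F)$ is the free Lie ring on the images $\bar a,\bar b$ of $a,b$ in degree one, with Lie bracket induced by the group commutator. The first step is therefore to read \eqref{lieident} in $\mathsf{gr}(F)$ with $a=\bar a$, $b=\bar b$: the left side $[[\bar a,_{2n}\bar b],\bar a]$ is a homogeneous element of degree $2n+2$, and the right side is a manifestly degree-$(2n+2)$ Lie element. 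This identity in $\mathsf{gr}(F)$ is exactly the statement that the two sides agree modulo $\gamma_{2n+3}(F)$ once lifted back to $F$.

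The key translation step is that the group commutator and the Lie bracket coincide modulo one higher degree. First I would record the standard commutator calculus fact: if $u\in\gamma_i(F)$ and $v\in\gamma_j(F)$ then the group commutator $[u,v]$ lies in $\gamma_{i+j}(F)$ and its class there equals the Lie bracket of the classes of $u$ and $v$; moreover products of group elements project to sums in the abelian quotient $\gamma_k(F)/\gamma_{k+1}(F)$. Applying this inductively, the iterated group commutator $[[a,_{2n}b],a]$ has class in $\gamma_{2n+2}(F)/\gamma_{2n+3}(F)$ equal to the Lie element $[[\bar a,_{2n}\bar b],\bar a]$, and similarly each factor $[[a,_{2n-1-i}b],[a,_ib]]$ on the right lies in $\gamma_{2n+2}(F)$ with class the corresponding Lie bracket. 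The outer commutator with $b$ on the right raises the weight: the product $\prod_{i}[[a,_{2n-1-i}b],[a,_ib]]^{(-1)^i}$ lies in $\gamma_{2n+1}(F)$, and bracketing it with $b$ lands in $\gamma_{2n+2}(F)$ with class given by the bracket of the product's class with $\bar b$. Here I would use that the sign $(-1)^i$ and the product structure pass correctly to the alternating sum in the abelian quotient.

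Putting these together, both sides of the asserted congruence are elements of $\gamma_{2n+2}(F)$ whose images in $\gamma_{2n+2}(F)/\gamma_{2n+3}(F)$ are the two sides of \eqref{lieident} evaluated in $\mathsf{gr}(F)$. Since \eqref{lieident} holds in any Lie ring, in particular in the free Lie ring $\mathsf{gr}(F)$, these images coincide, which is precisely the congruence modulo $\gamma_{2n+3}(F)$.

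The main obstacle I anticipate is bookkeeping the bracket-versus-commutator correspondence for the nested expressions on the right-hand side, specifically ensuring that forming the product of the inner commutators and then taking the outer commutator with $b$ really does commute with passing to the graded quotient. The subtlety is that the identity $[u,vw]\equiv[u,v][u,w]$ and $[uv,w]\equiv[u,w][v,w]$ hold only modulo higher terms, so one must check that the error terms all fall into $\gamma_{2n+3}(F)$ for the relevant weights; this is where the weight accounting (inner factors in degree $2n+1$, outer bracket pushing to $2n+2$) has to be done carefully, but it is routine once the degrees are tracked.
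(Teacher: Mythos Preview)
Your proposal is correct and is exactly the approach the paper intends: the corollary is stated without proof because it follows from the Lie-ring identity \eqref{lieident} by passing to the associated graded Lie ring $\mathsf{gr}(F)$, which is free on $\bar a,\bar b$, and reading the degree-$(2n+2)$ identity back as a congruence modulo $\gamma_{2n+3}(F)$. Your weight bookkeeping (inner factors in $\gamma_{2n+1}$, outer bracket with $b$ landing in $\gamma_{2n+2}$, error terms from the commutator identities falling into $\gamma_{2n+3}$) is the only thing to check, and you have it right.
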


We denote by $F$ the free group on two variables $F=F(a,b)$ and denote by  $\varphi:F\to \LG$ the obvious epimorphism to the integral lamplighter group. It induces a homomorphism between pronilpotent completions
$$\hat \varphi:\hat F_\ZZ \to \widehat{\LG}_\ZZ.$$ Note that $$\varphi([u,v])=1 \text{ for }u, v \in \langle a \rangle^F, $$
where $\langle a \rangle^F $ is the normal subgroup of $F$ generated by $a.$

\begin{prop}\label{proposition_main} For any sequence of integers $q=(q_1,q_2,\dots),$ there exists a pair of elements $r_q,s_q\in \gamma_3(\hat F_\ZZ)$ such that
\begin{enumerate}
\item  $[r_q,a][s_q,b]=1$;
\item $\hat \varphi (s_q)=1;$
\item
$ \hat \varphi(r_q)=\prod_{i=3}^{\infty} [a,_{i-1} b]^{n_i},
$
where $n_{2i+1}=q_i$ for $i\geq 1$ and $n_{2i}$ are some integers\\ (we control only odd terms of the product).
\end{enumerate}
\end{prop}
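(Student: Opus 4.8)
The plan is to construct $r_q$ and $s_q$ as limits of finite partial products, built by successive approximation along the lower central series of $\hat F_\ZZ=\varprojlim F/\gamma_i(F)$. At stage $k$ I will have honest elements $r^{(k)},s^{(k)}\in\gamma_3(F)$ satisfying three invariants: the \emph{defect} $\rho_k:=[r^{(k)},a][s^{(k)},b]$ lies in $\gamma_k(F)$; one has $\varphi(s^{(k)})=1$; and the coefficients of $\hat\varphi(r^{(k)})$ that I have already frozen agree with the prescribed data $n_{2i+1}=q_i$. Because every correction introduced at stage $k$ will lie in $\gamma_{k-1}(F)$, the sequences $(r^{(k)})$ and $(s^{(k)})$ are Cauchy for the pronilpotent topology and converge to elements $r_q,s_q\in\gamma_3(\hat F_\ZZ)$; passing to the limit turns the three invariants into conclusions (1)--(3).

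For the inductive step I pass to the associated graded Lie ring $L=\bigoplus_k\gamma_k(F)/\gamma_{k+1}(F)$, the free Lie ring on $a,b$. Since $L$ is free, $L_k=[L_{k-1},a]+[L_{k-1},b]$, so the leading term $\bar\rho_k\in L_k$ of the defect can be written as $[\bar u,a]+[\bar v,b]$ with $\bar u,\bar v\in L_{k-1}$; lifting $\bar u,\bar v$ to $u,v\in\gamma_{k-1}(F)$ and replacing $r^{(k)}\mapsto r^{(k)}u^{-1}$, $s^{(k)}\mapsto s^{(k)}v^{-1}$ pushes the defect into $\gamma_{k+1}(F)$, as all new factors lie in $\gamma_k$ and hence commute modulo $\gamma_{k+1}$. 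This decomposition is far from unique, and the whole argument lives in exploiting that ambiguity: it is governed by the syzygy module $\{(\bar w,\bar z):[\bar w,a]+[\bar z,b]=0\}$, whose elements are produced precisely by the Lie identity \eqref{lieident}, namely $[[a,_{2n}b],a]=[w_n,b]$ with $w_n=\prod_{i=0}^{n-1}[[a,_{2n-1-i}b],[a,_i b]]^{(-1)^i}$.

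To realize the prescribed coefficient $q_i$ I deliberately insert the factor $[a,_{2i}b]^{q_i}$ into $r$ at weight $2i+1$. This is exactly the move licensed by Corollary \ref{corident}: the resulting change $[[a,_{2i}b]^{q_i},a]$ in the $a$-commutator equals $[w_i^{q_i},b]$ modulo $\gamma_{2i+3}(F)$, a pure $b$-commutator which I cancel by inserting $w_i^{-q_i}$ into $s$. The point is that each $w_i$ is a product of commutators of the Engel elements $[a,_k b]$, all of which lie in $\langle a\rangle^F$; hence $w_i\in[\langle a\rangle^F,\langle a\rangle^F]$ and $\varphi(w_i)=1$, so the ``debt'' incurred in $r$ by a controlled insertion is paid off inside $\ker\varphi$, preserving the invariant $\varphi(s^{(k)})=1$. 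The residual $\gamma_{2i+3}$-errors, together with all the generic defect corrections, are absorbed by further factors. Working in $\widehat{\LG}_\ZZ=\ZZ[\![x]\!]\rtimes C$ one checks that under $\hat\varphi$ an $a$-commutator kills the base image while a $b$-commutator multiplies it by $y:=(1+x)^{-1}-1$, so $[a,_{k}b]\mapsto y^{k}$ and the controlled insertions fix exactly the even-$y$-power part of $\hat\varphi(r_q)=\sum_{k\ge2}m_k y^k$, namely $m_{2i}=q_i$. I keep as an additional invariant that these even-power coefficients equal the prescribed $q_i$, and I use the syzygy freedom to steer all remaining corrections so that they affect only the odd powers of $y$ (the uncontrolled $n_{2i}$) and leave the frozen coefficients intact; rewriting the resulting power series in the topological basis $\{y^{k}\}$ then gives the Engel product of (3).

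The main obstacle is this simultaneous bookkeeping at every weight: one must kill the defect, keep $\varphi(s^{(k)})=1$, and leave the already-frozen coefficients $q_1,q_2,\dots$ untouched, all at once. The crux that makes it possible---and the reason only the odd weights can be controlled---is the identity \eqref{lieident}: it is the sole source of syzygies of the form $[\,\cdot\,,a]=[\,\cdot\,,b]$, and it exists only for the \emph{even} Engel brackets $[a,_{2n}b]$ (indeed $[[a,b],a]$ is not a $b$-bracket in $L_3$), which are exactly the elements mapping to even powers of $y$. I expect verifying convergence, and checking that the steered corrections never re-disturb the frozen odd-weight coefficients, to be the most delicate points; the remaining manipulations are routine commutator calculus modulo successive terms of the lower central series.
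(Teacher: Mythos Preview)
Your plan matches the paper's: successive approximation along $\gamma_k(F)$, with Corollary~\ref{corident} used to compensate each controlled Engel insertion $[a,_{2i}b]^{q_i}$ by the product $w_i\in[\langle a\rangle^F,\langle a\rangle^F]\subseteq\ker\varphi$, and passage to the limit in $\hat F_\ZZ$.

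The one point that is not yet right is your mechanism for preserving the invariant $\varphi(s^{(k)})=1$ under the \emph{generic} defect-correction $s^{(k)}\mapsto s^{(k)}v^{-1}$. You invoke ``syzygy freedom'', but every syzygy you actually exhibit has its $b$-component equal to some $w_n\in\ker\varphi$, so adjusting $(u,v)$ by such a syzygy never changes $\varphi(v)$; if $\varphi(v)\ne 1$ a priori, steering cannot repair it. What the paper does instead is show that $\varphi(v)\equiv 1$ is \emph{forced}. Since $r^{(k)}\in\gamma_3(F)\subseteq\langle a\rangle^F$ and the image of $\langle a\rangle^F$ in $\LG$ is abelian, one has $\varphi([r^{(k)},a])=1$; combined with the inductive hypothesis $\varphi(s^{(k)})=1$ this gives $\varphi(\rho_k)=1$. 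Writing $\bar\rho_k=[\bar u,a]+[\bar v,b]$ then yields $\varphi([v,b])\equiv 1$, hence $\varphi(v)\equiv 1$ because $[\,\cdot\,,b]$ is injective on the cyclic graded pieces $\gamma_m(\LG)/\gamma_{m+1}(\LG)$. You already recorded the key ingredient --- ``an $a$-commutator kills the base image'' --- so apply it here rather than appealing to syzygies. Likewise, no steering is needed to protect the frozen coefficients of $r$: each correction $u^{-1}$ lives at a single new weight, and at a controlled (odd) weight you simply adjust the exponent of the Engel insertion $[a,_{k}b]$ to absorb whatever $\varphi(u)$ contributes, exactly as the paper does.
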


\begin{proof}
We claim that there exist sequences of  elements
$
r_q^{(3)}, r_q^{(4)}, \dots \in F$ and  $ s_q^{(3)}, s_q^{(4)}, \dots \in F
$
such that
\begin{enumerate}
\setcounter{enumi}{-1}
\item  $r_q^{(k)},s_q^{(k)}\in \gamma_k(F);$
\item $[\prod_{i=3}^kr_q^{(i)}, a][\prod_{i=3}^ks_q^{(i)},b]\in \gamma_{k+2}(F);$
\item  $\varphi(s_q^{(k)})=1;$
\item  $\varphi( \prod_{i=3}^{k} r^{(i)}_q )\equiv  \prod_{i=3}^k [a,_{i-1} b]^{n_i} \mod \gamma_{k+1}(\LG),$ where $n_{2i+1}=q_i$ for $2i+1\leq k.$
\end{enumerate}
Then we take $r_q=\prod_{i=3}^\infty r_q^{(i)}$ and $s_q=\prod_{i=3}^\infty s_q^{(i)}$ and the assertion follows. So it is sufficient to construct such elements $r_q^{(k)}, s_q^{(k)}$ inductively.

In order to prove the base case we set
$$
r_q^{(3)}:=[a,b,b]^{q_1},\ s_q^{(3)}:=[a,b,a]^{-q_1}.
$$
Corollary \ref{corident}  with $n=1,$ implies that
$$
[r_q^{(3)},a][s_q^{(3)},b]\in \gamma_5(F).
$$
Clearly $s_q^{(3)},r_q^{(3)}\in\gamma_3(F),$  $\varphi(s_q^{(3)})=1$ and $\varphi(r_q^{(3)})=[a,_2b]^{q_1}.$

In order to prove the inductive step, assume that we already constructed 
$$
r_q^{(3)},\dots, r_q^{(k)},\ s_q^{(3)},\dots, s_q^{(k)},
$$ with the properties (0)-(3). Construct $r_q^{(k+1)}$ and $s_q^{(k+1)}.$ Note that any element of $\gamma_{k+2}(F)/\gamma_{k+3}(F)$ can be presented as $[A,a][B,b] \cdot \gamma_{k+3}(F),$  where $A,B\in \gamma_{k+1}(F).$ Then
\begin{equation}\label{eq2222}
[\prod_{i=3}^kr_q^{(i)}, a][\prod_{i=3}^ks_q^{(i)},b]\equiv [A,a][B,b]\mod \gamma_{k+3}(F).
\end{equation}
Using that the images of $[A^{-1},a],[B^{-1},b]$ are in the center of $F/\gamma_{k+3}(F),$ that $\prod_{i=3}^kr_q^{(i)},  \prod_{i=3}^ks_q^{(i)} \in \gamma_3(F)$ and the identity $[xy,z]=[x,z]^y\cdot [y,z]$ we obtain
\begin{equation}\label{eq2222'}
[ \prod_{i=3}^kr_q^{(i)}A^{-1}, a]\ \cdot \ [\prod_{i=3}^ks_q^{(i)}B^{-1},b]\in  \gamma_{k+3}(F).
\end{equation}

Next we prove that $$\varphi(B)=1.$$
Since $B\in \gamma_{k+1}(F)$ we have
$$
B \equiv [a,_kb]^{e}c \mod \gamma_{k+2}(F),$$
where $ e\in \mathbb Z$ and $c$ is a product of powers of other basic commutators of weight $k+1.$ All these other basic commutators contain at least twice  $a.$ It follows that $\varphi(c)=1.$ Since $A\in \gamma_3(F) \subseteq \langle a \rangle^F$, we have $\varphi([A,a])=1.$ Moreover, $ \varphi( [\prod_{i=3}^kr_q^{(i)}, a][\prod_{i=3}^ks_q^{(i)},b])=1.$ Then
$$
[a,_{k+1}b]^e\in \gamma_{k+3}(\LG).
$$
This implies that $e=0$ and hence $\varphi(B)=1.$

If $k$ is odd, we do need to care about (3) and we just take $$r_q^{(k+1)}=A^{-1},\hspace{1cm} s_q^{(k+1)}=B^{-1}.$$ Indeed, it is easy to check that the properties (0)-(2) are satisfied and the property (3) automatically follows.

Suppose now that $k$ is even, say $k=2k'$. Consider the image of the element $\prod_{i=3}^k r_q^{(i)} \cdot A^{-1}$ in the quotient $\LG/\gamma_{k+2}(\LG)$.
By the induction hypothesis,
$$\varphi (\prod_{i=3}^k r_q^{(i)}) \equiv \prod_{i=3}^k [a,_{i-1} b]^{n_i} \cdot c' \mod \gamma_{k+2}(\LG), $$
where $c'\in \gamma_{k+1}(\LG)$.
Since the quotient
$\gamma_{k+1}(\LG)/\gamma_{k+2}(\LG)$
 is cyclic with generator
 $[a,_kb]\cdot \gamma_{k+2}(\LG)$,
 $$c'\equiv  [a,_kb]^y \mod \gamma_{k+2}(\LG)$$
 for some $y\in \mathbb Z$. For $n\geq 1,$ denote
$$
z_n:=\prod_{i=0}^{n-1} [[a,_{2n-1-i}b],[a,_{i}b]]^{(-1)^i}.
$$
Corollary \ref{corident} implies that
$$
[[a,_kb],a][z_{k'}^{-1},b]\in \gamma_{k+3}(F).
$$
We set $$r^{(k+1)}_q:=A^{-1}[a,_kb]^{q_{k'}-e},\ s^{(k+1)}_q:=B^{-1}z_{k'}^{-({q_{k'}-e})}.$$
Now
$$
[\prod_{i=3}^{k+1}r_q^{(i)}, a][\prod_{i=3}^{k+1}s_q^{(i)},b]\in \gamma_{k+3}(F)
$$
and
$$\varphi(\prod_{i=3}^{k+1}r_q^{(i)})\equiv \prod_{i=3}^{k+1} [a,_{i-1} b]^{n_i}. $$ The properties (0) and (2) are obvious.
\end{proof}

\section{Proof of Theorems 1 and 2}
Let $F$ be a free group of rank $\ \geq 2$ and $p$ be a prime. We will show that the image of the homomorphism
$H_2(\hat F_\ZZ,\ZZ) \to H_2(\hat F_\QQ,\QQ)$
is uncountable. The proof that the image of the map $H_2(\hat F_\ZZ,\ZZ) \to H_2(\hat F_\ZZ,\ZZ/p)$ is uncountable is similar.

Since the free group with two generators is a retract of a free group of higher rank, it is enough to prove this only for $F=F(a,b).$ The map
\begin{equation}\label{eq_map1}
H_2(\hat F_\ZZ,\ZZ) \to H_2(\widehat{\LG}_\QQ, \QQ)
\end{equation}
factors through $H_2(\hat F_\QQ,\QQ)$. Then it is enough to prove that the  image the map \eqref{eq_map1} is uncountable.

For $q\in \{ 0,1\}^{\mathbb N}$ we denote by $r_q,s_q$ some fixed elements of $\hat F_\ZZ$ satisfying properties (1), (2), (3) of Proposition \ref{proposition_main}. Then
$$\hat \varphi(r_q)=\prod_{i=3}^\infty [a,_{i-1} b]^{n_i(q)},$$
where $n(q)_{2i+1}=q_i$ and
$$[r_q,a][s_q,b]=1, \hspace{1cm} \hat \varphi(s_q)=1.$$
Set $$f_q= \sum_{i=3}^\infty  n_i(q) x^{i-1} \in \ZZ[\![x]\!].$$
 If we consider $\widehat{\LG}_\ZZ$ as the semidirect product $\ZZ[\![x]\!] \rtimes C,$ we obtain that $[a,_{i-1} b]=(x^{i-1},1)$ and hence
$$ \hat \varphi(r_q)=(f_q,1).  $$
If we denote by $\hat \varphi_\QQ$ the composition of $\hat \varphi$ with the map $\widehat{\LG}_\ZZ\to \widehat{\LG}_\QQ,$  we obtain
$$ \hat \varphi_\QQ(r_q)=(f_q^\QQ,1), $$
where $f_q^\QQ$ is the image of $f_q $ in $\QQ[\![x]\!].$ Consider the map
$$\Theta_\QQ : \QQ[\![x]\!] \longrightarrow H_2(\widehat{\LG}_\QQ,\QQ)$$
given by
$$ f \mapsto ((f,1) \wedge 1)\otimes \QQ.$$ Observe that this map is the composition of the map from Proposition \ref{prop_kernel_to_exterior} and the isomorphism from Proposition \ref{prop_homology_of_lamplighter}. Therefore the kernel of $\Theta_\QQ$ is countable. Set
$$ A:=\{ f_q^\QQ \mid q\in \{0,1\}^{\mathbb N} \} \subseteq \QQ[\![x]\!] .$$
Using that $f^\QQ_q=\sum_{i=3}^\infty  n_i(q) x^{i-1},$ where $n_{2i+1}(q)=q_i,$ we obtain that $A$ is uncountable. Using that the kernel of $\Theta_\QQ$ is countable, we obtain that its image $$\Theta_\QQ(A)=\{ ((f^\QQ_q,1) \wedge 1) \otimes \QQ \mid q\in \{0,1\}^{\mathbb N}\} \subseteq H_2(\widehat{\LG}_\QQ,\QQ)$$ is uncountable.
Finally, observe that any element $((f^\QQ_q,1) \wedge 1) \otimes \QQ$ of $\Theta_\QQ(A)$ has a preimage in $H_2(\hat F_\ZZ,\ZZ)$ given by $(r_q\wedge a)(s_q \wedge b),$ and then $\Theta_\QQ(A)$ lies in the image of $H_2(\hat F_\ZZ,\ZZ)\to H_2(\widehat{\LG}_\QQ,\QQ).$ This implies that the groups $H_2(\hat F_\QQ,\QQ)$ and $H_2(\hat F_\ZZ,\ZZ/p)\cong H_2(\hat F_\ZZ,\ZZ)\otimes \ZZ/p$ are uncountable and Theorems 1 and 2 follow.

\end{document}